\newtheorem{thm}{Theorem}[section]
\newtheorem{cor}[thm]{Corollary}
\newtheorem{lem}[thm]{Lemma}
\newtheorem{prop}[thm]{Proposition}
\theoremstyle{definition}
\theoremstyle{remark}
\newtheorem{rem}[thm]{Remark}
\numberwithin{equation}{section}
\newcommand{\NN}{\mathbb{N}}
\newcommand{\PP}{\mathbb{P}}
\newcommand{\Real}{\mathrm{Re}}
\begin{document}
\title{Equidistribution of Signs for Modular Eigenforms of Half Integral Weight}
\author{Ilker Inam\footnote{Uludag University, Deparment of Mathematics, Faculty of Arts and Sciences, 16059 Gorukle, Bursa, Turkey, ilker.inam@gmail.com}
and Gabor Wiese\footnote{Universit\'e du Luxembourg,
Facult\'e des Sciences, de la Technologie et de la Communication,
6, rue Richard Coudenhove-Kalergi,
L-1359 Luxembourg, Luxembourg, gabor.wiese@uni.lu}}

\maketitle

\begin{abstract}
Let $f$ be a cusp form of weight $k+1/2$ and at most quadratic nebentype character whose Fourier coefficients $a(n)$ are all real. We study an equidistribution conjecture of Bruinier and Kohnen for the signs of $a(n)$. We prove this conjecture for certain subfamilies of coefficients that are accessible via the Shimura lift by using the Sato-Tate equidistribution theorem for integral weight modular forms. Firstly, an unconditional proof is given for the family $\{a(tp^2)\}_p$ where $t$ is a squarefree number and $p$ runs through the primes. In this case, the result is in terms of natural density. To prove it for the family $\{a(tn^2)\}_n$ where $t$ is a squarefree number and $n$ runs through all natural numbers, we assume the existence of a suitable error term for the convergence of the Sato-Tate distribution, which is weaker than one conjectured by Akiyama and Tanigawa. In this case, the results are in terms of Dedekind-Dirichlet density.\\
\textbf{Mathematics Subject Classification (2010).} Primary 11F37; Secondary 11F30.\\
\textbf{Keywords.} Forms of half-integer weight, Shimura lift, Fourier coefficients of
automorphic forms, Sato-Tate equidistribution
\end{abstract}

\section{Introduction}
The signs of the coefficients of half-integral weight modular forms have attracted
some recent attention. In particular, Bruinier and Kohnen conjectured an equidistribution of
signs. Using the Shimura lift and the Sato-Tate equidistribution theorem we obtain
results towards this conjecture.

Throughout this paper, the notation is as follows.
Let $k \ge 2$, $4 \mid N$ be integers, $\chi$ a Dirichlet character modulo~$N$ s.t.\ $\chi^2=1$ and
let $f=\sum_{n=1}^{\infty}a(n)q^n \in S_{k+1/2}(N, \chi)$ be a non-zero cuspidal
Hecke eigenform (for operators $T_{p^2}$ for primes $p \nmid N$)
of weight $k+\frac{1}{2}$ (see \cite{Shi}, p.~458) with real coefficients.
Let $F_t=\sum_{n=1}^{\infty}A_t(n)q^n \in S_{2k}(N/2, \chi^2)$ be the Hecke eigenform
(for operators $T_p$ for primes $p \nmid N$)
of weight~$2k$ corresponding to~$f$ under the Shimura lift for a fixed squarefree $t$
such that $a(t) \neq 0$ (see Section~\ref{sec:background}).
We work under the assumption that $F_t$ does not have complex multiplication.

The question about the sign changes of coefficients of half integral weight modular forms has been asked by Bruinier and Kohnen in~\cite{BK} and there it was shown that the sequence $\{a(tn^2)\}_{n\in \NN}$ of coefficients of half integral weight modular forms has infinitely many sign changes under the additional hypothesis that a certain $L-$function has no zeros in the interval $(0,1)$ (later this hypothesis has been removed in \cite{Kohnen}). In addition to this partly conditional result, the authors came up with a suggestion supported by some numerical experiments. They claimed that half of the coefficients are positive among all non-zero coefficients due to observations on computations made for one example of weight $11/2$ given in \cite{KZ} and for one example of weight $3/2$ given in \cite{Du} for indices up to $10^6$. This claim is also supported by $5$ different examples of weight 3/2 in computations done in the preparation process of \cite{I} for indices up to $10^7$. In \cite{KLW}, Kohnen, Lau and Wu also study the sign change problem on specific sets of integers. They establish lower bounds of the best possible order of magnitude for the number of those coefficients that have the same signs. These give an improvement on some results in \cite{BK} and \cite{Kohnen}.
Although the problem of the equidistribution of signs has been mentioned informally in \cite{BK}, the conjecture was only given in \cite{KLW} formally.

In this paper we improve the previously known results towards the Bruinier-Kohnen conjecture by proving the equidistribution of the signs of $\{a(tn^2)\}_n$ for $n$ running through the set of primes (see Theorem~\ref{thm:prime}) and through the set of all positive integers (see Corollary~\ref{cor:squarefree2}). The former result is formulated in terms of natural density, the latter for Dedekind-Dirichlet density. For the latter result, we have to assume a certain error term for the convergence of the Sato-Tate distribution for integral weight modular forms, which is weaker than an error term conjectured by Akiyama and Tanigawa~\cite{AT}.
Relying on the Shimura lift as in the papers cited above, the techniques of the present article also do not extend to study the equidistribution of the signs of $\{a(n)\}_n$, when $n$ runs through the squarefree integers.

The ideas and techniques of the present paper have been adapted by Narasimha Kumar to $q$-exponents of generalised modular functions~\cite{Ku}.
In a sequel to this work~\cite{AIG}, written together with Sara Arias-de-Reyna, we extend the main theorem to the case when $F_t$ has complex multiplication and we also weaken the assumption on the error bound.

\section{Density and regular sets of primes}\label{sec:density}

Let $\PP$ denote the set of all prime numbers. We first recall some definitions.
Let $S \subseteq \PP$ be a set of primes. It is said to have {\em natural density}~$d(S)$
if the limit $\lim_{x \to \infty} \frac{\pi_S(x)}{\pi(x)}$ exists and is equal to~$d(S)$,
where $\pi(x) = \#\{ p \le x  \;|\; p \in \PP \}$ is the prime number counting function
and $\pi_S(x) := \#\{p \le x \;|\; p \in S\}$.
The set $S$ is said to have {\em Dirichlet density $\delta(S)$}
if the limit $\lim_{z \to 1^+} \frac{\sum_{p\in S}\frac{1}{p^z}}{\log\big(\frac{1}{z-1}\big)}$
exists and is equal to~$\delta(S)$ (the limit $\lim_{z \to 1^+}$ is defined via sequences of real
numbers tending to~$1$ from the right). If $S$ has a natural density, then it also has
a Dirichlet density and the two coincide. As in \cite{Na}, p.~343f, we call $S$ {\em regular} if
there is a function $g(z)$ holomorphic on $\Real(z) \ge 1$ such that
$\sum_{p\in S}\frac{1}{p^z} = \delta(S) \log\big(\frac{1}{z-1}\big) + g(z)$.
Note that $\PP$ is a regular set of primes of Dirichlet density~$1$.
We collect some more straight forward properties of regular sets of primes in the following lemma.

\begin{lem}\label{lem:regular}
\begin{enumerate}[(a)]
\item Let $S$ be any set of primes such that
the series $\sum_{p\in S} \frac{1}{p}$ converges to a finite value. Then $S$
has a Dirichlet density equal to~$0$.
\item Let $S$ be a regular set of primes. Then the Dirichlet density of $S$ is~$0$ if
and only if the series $\sum_{p\in S} \frac{1}{p}$ converges to a finite value.
\item Let $S_1,S_2$ be two regular sets of primes having the same Dirichlet density
$\delta(S_1)=\delta(S_2)$. Then the function
$\sum_{p \in S_1} \frac{1}{p^z} - \sum_{q \in S_2} \frac{1}{q^z}$
is analytic on $\Real(z) \ge 1$.
\end{enumerate}
\end{lem}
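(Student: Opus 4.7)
The three parts are essentially manipulations with the definitions, and the plan is to dispose of them in the order (a), (c), (b), since (b) in one direction is an immediate consequence of (a), while the other direction of (b) relies on a monotone convergence argument that stays close in spirit to (c).

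For (a), the idea is simply that the numerator defining the Dirichlet density stays bounded while the denominator tends to infinity. More precisely, for real $z > 1$ and any prime $p$ one has $p^{-z} \le p^{-1}$, so $\sum_{p \in S} p^{-z} \le \sum_{p \in S} p^{-1} < \infty$ by hypothesis. Since $\log\bigl(\tfrac{1}{z-1}\bigr) \to \infty$ as $z \to 1^+$, the ratio in the definition of Dirichlet density tends to $0$. Thus $\delta(S) = 0$.

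For (c), I simply subtract the two regularity representations. By hypothesis, there exist functions $g_1, g_2$ holomorphic on $\Real(z) \ge 1$ such that
\[
\sum_{p \in S_i} \frac{1}{p^z} = \delta(S_i)\, \log\!\Bigl(\frac{1}{z-1}\Bigr) + g_i(z), \quad i = 1, 2.
\]
Since $\delta(S_1) = \delta(S_2)$, the logarithmic singularities cancel when one subtracts, leaving $g_1(z) - g_2(z)$, which is holomorphic on $\Real(z) \ge 1$.

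For (b), one direction is (a). For the converse, assume $S$ is regular with $\delta(S) = 0$. Then by the definition of regularity, $\sum_{p \in S} p^{-z} = g(z)$ for some function $g$ holomorphic (hence continuous) on $\Real(z) \ge 1$; in particular $\lim_{z \to 1^+} \sum_{p \in S} p^{-z} = g(1) < \infty$, the limit being taken along real $z > 1$. Since the terms $p^{-z}$ are positive and increase monotonically to $p^{-1}$ as $z \to 1^+$, the monotone convergence theorem applied to counting measure on $S$ gives $\sum_{p \in S} p^{-1} = g(1) < \infty$. The main (mild) obstacle is this last step: one must remember to invoke that the Dirichlet series has positive coefficients so that the passage to the limit can be justified, rather than appealing to some Tauberian-type result that would require more than mere regularity.
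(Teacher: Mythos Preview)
Your proof is correct. The paper itself does not prove this lemma at all: it is stated as a collection of ``straight forward properties'' and left without proof. Your argument fills in exactly the kind of elementary details the authors evidently had in mind---bounding the numerator in (a), subtracting the regularity representations in (c), and using monotone convergence for the nontrivial direction of (b)---so there is nothing to compare against, and your write-up is an appropriate justification of what the paper takes for granted.
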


For our application in Section~\ref{sec:all} we need regular sets of primes.
Hence, we now include a proposition showing that if a set of primes $S$ has a natural density
and the convergence satisfies a certain error term, then it is regular.
This is may be known, but, we are not aware of any reference; so we include a full proof.

\begin{prop}\label{prop:regular}
Let $S \subseteq \PP$ have natural density $d(S)$. Call $E(x) := \frac{\pi_S(x)}{\pi(x)} - d(S)$
the error function. Suppose that there are $\alpha > 0$,
$C > 0$ and $M > 0$ such that for all $x > M$ we have $|E(x)| \le C x^{-\alpha}$.

Then $S$ is a regular set of primes.
\end{prop}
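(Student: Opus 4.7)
The plan is to relate the Dirichlet-type series $\sum_{p\in S} p^{-z}$ to $\sum_{p\in\PP} p^{-z}$ via Abel summation, isolate the error term $E(x)$, and then invoke the regularity of $\PP$ itself. Concretely, for $\Real(z)>1$ partial summation applied to the indicator sequence of $S$ and to the constant sequence~$1$ gives
\[
\sum_{p\in S}\frac{1}{p^z} \;=\; z\int_2^\infty \frac{\pi_S(x)}{x^{z+1}}\,dx
\qquad\text{and}\qquad
\sum_{p\in\PP}\frac{1}{p^z} \;=\; z\int_2^\infty \frac{\pi(x)}{x^{z+1}}\,dx,
\]
where the boundary terms vanish because $\pi_S(x),\pi(x)\le x$. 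Subtracting $d(S)$ times the second identity from the first and using $\pi_S(x)=\big(d(S)+E(x)\big)\pi(x)$ yields
\[
\sum_{p\in S}\frac{1}{p^z}\;-\;d(S)\sum_{p\in\PP}\frac{1}{p^z}
\;=\; z\int_2^\infty \frac{E(x)\,\pi(x)}{x^{z+1}}\,dx \;=:\; h(z).
\]

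Next I would control $h(z)$ using the hypothesis. For $x>M$ the integrand is bounded in absolute value by $C\,x^{-\alpha}\cdot x\cdot x^{-\Real(z)-1}=C\,x^{-\Real(z)-\alpha}$, while on the compact range $2\le x\le M$ the integrand is bounded by an explicit constant (uniformly in $z$ varying in any compact subset of $\mathbb{C}$). Hence the integral defining $h(z)$ converges absolutely and uniformly on compact subsets of the open half-plane $\Real(z)>1-\alpha$. By Morera's theorem $h$ is holomorphic there, so in particular holomorphic on an open neighbourhood of $\{\Real(z)\ge 1\}$.

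Finally, since $\PP$ is itself a regular set of primes of Dirichlet density~$1$ (stated right before the lemma in the paper), there exists a function $g_0(z)$ holomorphic on $\Real(z)\ge 1$ with
\[
\sum_{p\in\PP}\frac{1}{p^z} \;=\; \log\!\Big(\tfrac{1}{z-1}\Big) + g_0(z).
\]
Combining this with the identity for $h(z)$ gives
\[
\sum_{p\in S}\frac{1}{p^z} \;=\; d(S)\,\log\!\Big(\tfrac{1}{z-1}\Big) \;+\; \bigl(d(S)\,g_0(z)+h(z)\bigr),
\]
in which the parenthesised function is holomorphic on $\Real(z)\ge 1$. This is exactly the regularity condition, with $\delta(S)=d(S)$.

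There is no serious obstacle here: the content is really just partial summation plus the trivial bound $\pi(x)\le x$, and the only point needing a little care is justifying that the integral for $h(z)$ extends analytically past $\Real(z)=1$, which is immediate from the power-law error bound $|E(x)|\le Cx^{-\alpha}$ with $\alpha>0$. The proof would break down only if one allowed $\alpha=0$, which is exactly why the quantitative error hypothesis is needed.
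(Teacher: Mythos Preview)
Your argument is correct and is essentially the paper's own proof: both express $\sum_{p\in S}p^{-z}-d(S)\sum_{p\in\PP}p^{-z}$ as $z\int_2^\infty E(x)\pi(x)\,x^{-z-1}\,dx$ via Abel summation, bound the integrand to obtain holomorphy past $\Real(z)=1$, and then invoke the regularity of~$\PP$. The only cosmetic difference is that you use the trivial estimate $\pi(x)\le x$ (giving holomorphy on $\Real(z)>1-\alpha$), whereas the paper quotes Rosser's bound $\pi(x)<x/(\log x-4)$; your choice is simpler and entirely sufficient.
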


\begin{proof}
We will use the notation
$D_S(z) := \sum_{p \in S} \frac{1}{p^z}$ and $D(z) := \sum_{p \in \mathbb{P}} \frac{1}{p^z}$.
We abbreviate $d := d(S)$, put $g(x) := E(x)\pi(x)$
and $f(z) := \sum_{n=2}^\infty g(n) \big(\frac{1}{n^z} - \frac{1}{(n+1)^z}\big)$.
As $g(x)$ is a step function with jumps only at integers, we have
$$ f(z) = z \cdot \sum_{n=2}^\infty g(n) \int_{n}^{n+1} \frac{1}{x^{z+1}} dx
 = z \cdot \int_{2}^{\infty} \frac{g(x)}{x^{z+1}} dx.$$
From Theorem~29 of~\cite{Rosser}, we know that $\pi(x) < \frac{x}{\log(x)-4}$ for $x > 55$,
yielding $|g(x)| \le C\cdot\pi(x)\cdot x^{-\alpha} \le C\cdot\frac{x^{1-\alpha}}{\log(x)-4}$.
Thus for $\Real(z)>1-\frac{\alpha}{2}$ we get
$$|\int_{55}^{\infty}\frac{g(x)dx}{x^{z+1}}| \le \int_{55}^{\infty}\frac{|g(x)|}{x^{\Real(z)+1}}dx
\le C \int_{55}^{\infty}\frac{1}{x^{1+\frac{\alpha}{2}}}dx.$$
As the last integral is convergent, we conclude that $f(z)$ is an analytic function on
$\Real(z) \ge 1$. Since for $\Real(z)>1$ we have
$$D_S(z) = \sum_{n=2}^\infty \underbrace{\big(d \pi(n) - g(n) \big)}_{=\pi_S(n)}
\big(\frac{1}{n^z} - \frac{1}{(n+1)^z}\big)= d D(z) + f(z)$$
the proposition follows as~$\PP$ is regular of density~$1$.
\end{proof}

Finally we recall a notion of density on sets of natural numbers analogous to that of Dirichlet density.
A subset $A \subseteq \NN$ is said to have {\em Dedekind-Dirichlet density $\delta(A)$} if the limit
$\lim_{z \to 1^+} (z-1)\sum_{n=1, n \in A}^{\infty} \frac{1}{n^z}$ exists and is equal to $\delta(A)$.
If $A \subseteq \NN$ has a natural density (defined in the evident way), then
it also has a Dedekind-Dirichlet density, and they are the same.

\section{Shimura lift and Sato-Tate equidistribution}\label{sec:background}

We continue to use the notation described in the introduction.
We now summarise some properties of the Shimura lift that are proved in \cite{Shi}
and Chapter 2 of \cite{Niwa}.
There is the direct relation between the Fourier coefficients of~$f$ and those
of its lift~$F_t$, namely
\begin{equation}\label{eq:1}
A_t(n):=\sum_{d|n}^{} \chi_{t,N}(d)d^{k-1}a(\frac{tn^2}{d^2}),
\end{equation}
where $\chi_{t,N}$ denotes the character
$\chi_{t,N}(d):=\chi(d) \left(\frac{(-1)^{k}N^2t}{d} \right)$.
As we assume $f$ to be a Hecke eigenform for the Hecke operator $T_{p^2}$,
$F_t$ is an eigenform for the Hecke operator $T_p$, for all primes $p\nmid N$. In fact, in this case
$F_t=a(t)F$, where $F$ is a normalised Hecke eigenform independent of~$t$.
Moreover, from the Euler product formula for the Fourier coefficients of half integral weight modular forms, one obtains the multiplicativity relation for $(m,n)=1$
\begin{align}\label{coef}
a(tm^2)a(tn^2)=a(t)a(tm^{2}n^{2}).
\end{align}
Note that the assumption that $\chi$ be (at most) quadratic implies that
$F_t$ has real coefficients if $f$ does.
Furthermore, the coefficients of $F_t$ satisfy the Ramanujan-Petersson bound
$|\frac{A_t(p)}{a(t)}| \leq 2p^{k-1/2}$. We normalise them by letting
$$B_t(p):=\frac{A_t(p)}{2a(t)p^{k-1/2}} \in [-1,1].$$ 
Recall now that we are assuming $F_t$ without complex multiplication.
One defines the {\em Sato-Tate measure} $\mu$ to be the probability measure on the interval
$[-1,1]$ given by $\frac{2}{\pi} \sqrt{1-t^2}dt$.

Theorem B of \cite{BLGHT}, case 3 with $\zeta=1$, gives the important
{\em Sato-Tate equidistribution theorem} for $\Gamma_0(N)$, which applies in particular
to $F= \frac{F_t}{a(t)}$.
\begin{thm}[Barnet-Lamb, Geraghty, Harris, Taylor] \label{thm:ST}
Let $k \ge 1$ and let $F=\sum_{n \geq 1}^{} A(n)q^n$ be a normalised cuspidal Hecke eigenform of weight~$2k$ for $\Gamma_0(N)$ without complex multiplication.
Then the numbers $B(p) = \frac{A(p)}{2p^{k-1/2}}$ are $\mu$-equidistributed in $[-1,1]$,
when $p$ runs through the primes not dividing $N$.
\end{thm}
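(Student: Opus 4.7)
The plan is to reduce equidistribution to analytic statements about symmetric power $L$-functions via Weyl's equidistribution criterion, then invoke the deep potential automorphy theorems that give those analytic properties.

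First I would set up the analytic framework. Write $B(p) = \cos\theta_p$ with $\theta_p \in [0,\pi]$. Recalling that the Chebyshev polynomials of the second kind $U_m(\cos\theta) = \sin((m+1)\theta)/\sin\theta$ form an orthonormal basis of $L^2([-1,1], \mu)$ for the Sato-Tate measure, Weyl's criterion reduces the theorem to showing that for every integer $m \ge 1$,
\begin{equation*}
\frac{1}{\pi(x)} \sum_{\substack{p \le x \\ p \nmid N}} U_m(B(p)) \longrightarrow 0 \quad \text{as } x \to \infty.
\end{equation*}
The advantage of this reduction is that $U_m(B(p))$ is exactly (up to a power of $p$) the trace of the Satake parameters acting on the symmetric $m$-th power of the standard two-dimensional representation associated with $F$ at $p$; in other words, it is the $p$-th normalised coefficient of the $m$-th symmetric power $L$-function $L(s, \mathrm{Sym}^m F)$.

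Second I would connect the partial sums to the analytic behaviour of $L(s, \mathrm{Sym}^m F)$ on the line $\mathrm{Re}(s) = 1$ via a Wiener--Ikehara type Tauberian argument (this is standard and goes back to the Sato-Tate approach of Serre). Concretely, holomorphy and non-vanishing of $L(s, \mathrm{Sym}^m F)$ on $\mathrm{Re}(s) \ge 1$ for every $m \ge 1$ imply the Weyl sums above tend to zero, and hence $\mu$-equidistribution of the $B(p)$. This last implication is purely analytic and causes no trouble.

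Third, and this is the hard part, one must establish the required analytic properties of all symmetric power $L$-functions. Here I would appeal to the potential automorphy machinery of Barnet-Lamb--Geraghty--Harris--Taylor: using modularity lifting theorems of Taylor-Wiles type for compatible systems of $\ell$-adic Galois representations $\rho_{F,\ell}$ attached to $F$, one proves that $\mathrm{Sym}^m \rho_{F,\ell}$ becomes automorphic after base change to a suitable totally real or CM extension. Brauer's theorem on induced characters then gives a meromorphic continuation of $L(s, \mathrm{Sym}^m F)$ to the whole complex plane together with a functional equation, and a Rankin--Selberg/Jacquet--Shalika style non-vanishing argument on $\mathrm{Re}(s) = 1$ completes the input needed by the Tauberian step. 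The non-CM hypothesis on $F$ is crucial: it ensures that the residual representations remain irreducible in large enough image, which is the standing hypothesis for the potential automorphy theorems.

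The main obstacle is of course the third step, which is the entire substance of the Barnet-Lamb--Geraghty--Harris--Taylor theorem and whose proof I would simply cite rather than reproduce; the first two steps are formal reductions that I would carry out in detail.
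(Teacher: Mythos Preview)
Your outline is a faithful high-level sketch of how one actually proves the Sato--Tate theorem, but you should be aware that the paper does not attempt any proof of this statement at all: it is quoted as a black box, with the single sentence ``Theorem~B of~\cite{BLGHT}, case~3 with $\zeta=1$, gives the important Sato--Tate equidistribution theorem for $\Gamma_0(N)$.'' So there is nothing to compare on the paper's side beyond a citation.

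As for the content of your sketch, the reduction via Weyl's criterion and Chebyshev polynomials $U_m$, the identification of the Weyl sums with normalised coefficients of $L(s,\mathrm{Sym}^m F)$, and the appeal to potential automorphy for the analytic continuation and non-vanishing on $\Real(s)=1$ are all correct and constitute precisely Serre's strategy as completed by Barnet-Lamb--Geraghty--Harris--Taylor. One small inaccuracy: your explanation of the non-CM hypothesis is not quite right. The hypothesis is not primarily there to guarantee large residual image for the lifting theorems; rather, if $F$ has complex multiplication then the associated Galois representations are induced from a Hecke character, the Sato--Tate group drops from $\mathrm{SU}(2)$ to a torus (or its normaliser), and the $B(p)$ are then equidistributed for a \emph{different} measure, not~$\mu$. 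In other words, in the CM case the theorem as stated is simply false, not merely harder to prove. If you intend to write this up, that point is worth stating correctly.
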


This has the following simple corollary, which we formulate for the Shimura lift~$F_t$.
\begin{cor}\label{cor:ST}
Let $[a,b] \subseteq [-1,1]$ be a subinterval and
$S_{[a,b]}:=\{p \textnormal{ prime } |\; p \nmid N, B_t(p) \in [a,b] \}$.
Then $S_{[a,b]}$ has natural density equal to $\frac{2}{\pi}\int_{a}^{b}\sqrt{1-t^2}dt$.
\end{cor}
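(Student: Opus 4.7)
The plan is to reduce the corollary directly to Theorem~\ref{thm:ST} applied to the normalised Hecke eigenform $F$ with $F_t = a(t) F$. The first step is to observe that for every prime $p \nmid N$ one has $A_t(p) = a(t) A(p)$, where $A(n)$ denotes the $n$-th Fourier coefficient of $F$, and therefore
$$B_t(p) \;=\; \frac{A_t(p)}{2 a(t) p^{k-1/2}} \;=\; \frac{A(p)}{2 p^{k-1/2}} \;=\; B(p).$$
So studying $B_t(p)$ is exactly the same as studying $B(p)$ for the integral-weight form $F$.

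Next, invoke Theorem~\ref{thm:ST}: since $F_t$ is assumed non-CM, so is $F$, and the theorem applies, yielding that $\{B(p)\}_{p \nmid N}$ is $\mu$-equidistributed in $[-1,1]$. By definition this means that for every continuous test function $h : [-1,1] \to \RR$,
$$\lim_{x \to \infty} \frac{1}{\pi(x)} \sum_{\substack{p \le x \\ p \nmid N}} h\bigl(B(p)\bigr) \;=\; \int_{-1}^{1} h(t)\, d\mu(t).$$

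The final step is the standard passage from continuous test functions to indicator functions of intervals. Given $[a,b] \subseteq [-1,1]$ and $\varepsilon > 0$, sandwich the characteristic function $\mathbf{1}_{[a,b]}$ between two continuous functions $h_\varepsilon^- \le \mathbf{1}_{[a,b]} \le h_\varepsilon^+$ with $\int (h_\varepsilon^+ - h_\varepsilon^-)\, d\mu < \varepsilon$; this is possible because $\mu$ is absolutely continuous with respect to Lebesgue measure, so $\mu$ gives no mass to $\{a\}$ or $\{b\}$. Applying the equidistribution statement to $h_\varepsilon^\pm$ and letting $\varepsilon \to 0$ gives
$$\lim_{x \to \infty} \frac{\pi_{S_{[a,b]}}(x)}{\pi(x)} \;=\; \mu([a,b]) \;=\; \frac{2}{\pi}\int_a^b \sqrt{1-t^2}\, dt,$$
which is the claim. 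No step looks genuinely hard here: the only substantive input is Theorem~\ref{thm:ST}, and the obstacle, if any, is the routine check that the equidistribution with respect to continuous functions transfers to half-open/closed intervals, which reduces to the continuity of the cumulative distribution function of $\mu$.
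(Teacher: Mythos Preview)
Your proof is correct and takes essentially the same approach as the paper, which in fact gives no proof at all and simply states the result as an immediate corollary of Theorem~\ref{thm:ST}. You have spelled out the standard passage from Weyl equidistribution (continuous test functions) to the density statement for intervals, together with the identification $B_t(p)=B(p)$ via $F_t=a(t)F$; this is exactly the routine verification the paper leaves implicit.
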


\section{Equidistribution of signs for $\{a(tp^2)\}_{p \textnormal{ prime }}$}\label{sec:prime}

Our main unconditional result is the following theorem.

\begin{thm}\label{thm:prime}
We use the assumptions from the introduction, in particular, $F_t$ has no complex multiplication.
Define the set of primes
$$\PP_{> 0}:=\{p \in \PP | a(tp^2)  > 0 \}$$
and similarly $\PP_{< 0}$, $\PP_{\ge 0}$, $\PP_{\le 0}$, and $\PP_{=0}$
(depending on $f$ and~$t$).

Then the sets $\PP_{>0}$, $\PP_{<0}$, $\PP_{\ge 0}$, $\PP_{\le 0}$ have natural density~$1/2$
and the set $\PP_{=0}$ has natural density~$0$.
\end{thm}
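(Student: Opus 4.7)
The plan is to compare $\PP_{>0}$ with the set $\{p : B_t(p)>0\}$, which by Corollary~\ref{cor:ST} has natural density exactly $\frac{2}{\pi}\int_0^1\sqrt{1-s^2}\,ds=1/2$, and to show the two sets differ by a set of density~$0$.

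First I would extract the key relation from the Shimura formula~\eqref{eq:1} at a prime $p\nmid N$. Since the only divisors of $p$ are $1$ and $p$, equation~\eqref{eq:1} reads
\[
A_t(p)=a(tp^2)+\chi_{t,N}(p)\,p^{k-1}a(t),
\]
so dividing by $2a(t)p^{k-1/2}$ (recall $a(t)\neq 0$ is a standing hypothesis),
\[
\frac{a(tp^2)}{2a(t)p^{k-1/2}}=B_t(p)-\frac{\chi_{t,N}(p)}{2\sqrt{p}}.
\]
Thus, after absorbing the sign of $a(t)$, the sign of $a(tp^2)$ agrees with the sign of $B_t(p)-\frac{\chi_{t,N}(p)}{2\sqrt{p}}$, and a mismatch between $\operatorname{sign}(a(tp^2))$ and $\operatorname{sign}(B_t(p))$ forces $|B_t(p)|\le\frac{1}{2\sqrt{p}}$.

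Next I would run an $\varepsilon$-argument. Fix $\varepsilon>0$. For all primes $p$ with $p>\frac{1}{4\varepsilon^2}$ and $p\nmid N$, the symmetric difference between $\PP_{>0}$ and $\{p\nmid N : B_t(p)>0\}$, as well as the set $\PP_{=0}$ (which forces $B_t(p)=\frac{\chi_{t,N}(p)}{2\sqrt{p}}$), is contained in $\{p : |B_t(p)|\le\varepsilon\}$. By Corollary~\ref{cor:ST} the latter has natural density $\frac{2}{\pi}\int_{-\varepsilon}^{\varepsilon}\sqrt{1-s^2}\,ds$, which tends to $0$ as $\varepsilon\to 0$. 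Hence the upper natural density of the symmetric difference, and of $\PP_{=0}$, is $0$, so both sets have natural density~$0$. The finitely many excluded primes dividing $N$ or below the threshold contribute nothing to the density.

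Combining these two steps yields $d(\PP_{>0})=d(\{p : B_t(p)>0\})=\frac{2}{\pi}\int_0^1\sqrt{1-s^2}\,ds=\frac{1}{2}$, and by symmetry $d(\PP_{<0})=\frac{1}{2}$. Since $\PP_{\ge 0}=\PP_{>0}\sqcup \PP_{=0}$ and $\PP_{\le 0}=\PP_{<0}\sqcup \PP_{=0}$ and $d(\PP_{=0})=0$, the remaining two densities are $\frac{1}{2}$ as well. The only real subtlety is the boundary behaviour $|B_t(p)|\lesssim p^{-1/2}$, but this is harmless precisely because the Sato-Tate measure is absolutely continuous at $0$; no stronger input (e.g.\ an effective error term as used later in Section~\ref{sec:all}) is needed for this statement since natural density suffices and the threshold $1/(2\sqrt{p})$ can be swallowed into an arbitrarily small window.
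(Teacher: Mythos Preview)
Your argument is correct and follows essentially the same approach as the paper: both hinge on the relation $a(tp^2)/(2a(t)p^{k-1/2})=B_t(p)-\chi_{t,N}(p)/(2\sqrt{p})$, the observation that the perturbation $1/(2\sqrt{p})$ is below any fixed $\varepsilon$ for $p>1/(4\varepsilon^2)$, and Corollary~\ref{cor:ST}. The only cosmetic difference is that the paper runs a direct $\liminf/\limsup$ sandwich on $\pi_{>0}(x)/\pi(x)$ against $\mu([\varepsilon,1])$, whereas you phrase it as showing the symmetric difference $\PP_{>0}\,\triangle\,\{p:B_t(p)>0\}$ has density~$0$; these are equivalent formulations of the same $\varepsilon$-argument.
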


\begin{proof}
Denote by $\pi_{>0}(x) := \#\{p \le x \;|\; p \in \PP_{>0}\}$ and similarly
$\pi_{<0}(x)$, $\pi_{\ge 0}(x)$, $\pi_{\le 0}(x)$, and $\pi_{=0}(x)$.
Since dividing $f$ by $a(t)$ does not affect the assertions of the theorem,
we may and do assume $a(t)=1$. In that case $F_t$ is a normalised eigenform.
Equation~\eqref{eq:1} specialises to
$a(tp^{2})=A_t(p)-\chi_{t,N}(p)p^{k-1}$
for all primes~$p$, implying the equivalence:
$$a(tp^{2}) > 0 \Leftrightarrow B_t(p) > \frac{\chi_{t,N}(p)}{2\sqrt{p}}.$$
The idea is to use the Sato-Tate equidistribution to show that
$|A_t(p)|$ dominates the term $\chi_{t,N}(p)p^{k-1}$ for `most' primes.
Let $\epsilon >0$. Since for all $p > \frac{1}{4\epsilon^2}$ one has
$|\frac{\chi_{t,N}(p)}{2\sqrt{p}}| = \frac{1}{2 \sqrt{p}}  <\epsilon$,
we obtain
\begin{equation}\label{eq:up}
 \pi_{>0}(x) + \pi(\frac{1}{4\epsilon^2}) \ge
   \#\{p \le x \textnormal{ prime }| \; B_t(p) > \epsilon\}.
\end{equation}
By Corollary~\ref{cor:ST}, we have
$\lim_{x \to \infty} \frac{ \# \{p\le x \textnormal{ prime }| \; B_t(p) > \epsilon\}}{\pi (x)}
= \mu ([\epsilon, 1])$.
This implies that
$$\liminf_{x \to \infty} \frac{\pi_{>0}(x)}{\pi (x)} \ge \mu ([\epsilon, 1])$$
for all $\epsilon > 0$, whence we can conclude
$\liminf_{x \to \infty} \frac{\pi_{>0}(x)}{\pi (x)} \ge\mu ([0, 1]) = \frac{1}{2}$.

A similar argument yields
$\liminf_{x \to \infty} \frac{\pi_{\le 0}(x)}{\pi (x)} \ge\mu ([0, 1]) = \frac{1}{2}$.
Using $\pi_{\le 0}(x) = \pi(x) - \pi_{> 0}(x)$ gives
$\limsup_{x \to \infty} \frac{\pi_{>0}(x)}{\pi (x)} \le \mu ([0, 1]) = \frac{1}{2}$,
thus showing that $\lim_{x \to \infty} \frac{\pi_{>0}(x)}{\pi (x)}$ exists and is equal
to~$\frac{1}{2}$, whence by definition $\PP_{>0}$ has natural density~$\frac{1}{2}$.
The arguments for $\PP_{<0}(x)$, $\PP_{\ge 0}(x)$, $\PP_{\le 0}(x)$ are exactly the same,
and the conclusion for $\PP_{=0}$ immediately follows.
\end{proof}

We next show that the sets of primes in Theorem~\ref{thm:prime} are regular if the
Sato-Tate equidistribution converges with a certain error term.
A stronger error term was conjectured by Akiyama-Tanigawa
(see \cite{Mazur}, Conjecture 2.2, and Conjecture 1, on p.~1204 in \cite{AT}).

\begin{thm}\label{thm:prime-regular}
We make the same assumptions as in Theorem~\ref{thm:prime}.
We additionally assume that there are $C>0$ and $\alpha > 0$ such that for all
subintervals $[a,b] \subseteq [-1,1]$ one has
$$ \left| \frac{ \# \{p\le x \textnormal{ prime }| \; \frac{A_t(p)}{a(t)2p^{k-1/2}} \in [a,b]\}}{\pi (x)}
 - \mu([a,b]) \right| \le \frac{C}{x^\alpha}.$$

Then the sets $\PP_{>0}$, $\PP_{<0}$, $\PP_{\ge 0}$, $\PP_{\le 0}$, and $\PP_{=0}$ are regular sets of primes.
\end{thm}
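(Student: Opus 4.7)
My plan is to apply Proposition~\ref{prop:regular} to each of the five sets. Since Theorem~\ref{thm:prime} already supplies the natural densities, namely $1/2$ for the first four sets and $0$ for $\PP_{=0}$, the remaining task is to upgrade the Sato-Tate error hypothesis into a power-saving bound for each of the convergences $\pi_{>0}(x)/\pi(x)\to 1/2$, and similarly for the other sets.

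I would begin by revisiting the sandwich used in the proof of Theorem~\ref{thm:prime}, but this time keeping quantitative track of the error. The equivalence $a(tp^2)>0 \Leftrightarrow B_t(p) > \chi_{t,N}(p)/(2\sqrt{p})$, together with $|\chi_{t,N}(p)|/(2\sqrt{p}) < \epsilon$ for $p > 1/(4\epsilon^2)$, yields, for every $\epsilon>0$,
$$\#\{p\le x : B_t(p) > \epsilon\} - \pi(1/(4\epsilon^2)) \;\le\; \pi_{>0}(x) \;\le\; \#\{p \le x : B_t(p) > -\epsilon\} + \pi(1/(4\epsilon^2)).$$
Applying the hypothesised Sato-Tate error bound to the intervals $[\epsilon,1]$ and $[-\epsilon,1]$, and using $\mu([\epsilon,1])=1/2+O(\epsilon)$ and $\mu([-\epsilon,1])=1/2+O(\epsilon)$ (from the explicit density $\frac{2}{\pi}\sqrt{1-t^2}$), one obtains
$$\left|\frac{\pi_{>0}(x)}{\pi(x)}-\frac{1}{2}\right| \;\ll\; \epsilon + x^{-\alpha} + \frac{\pi(1/(4\epsilon^2))}{\pi(x)} \;\ll\; \epsilon + x^{-\alpha} + \frac{\log x}{\epsilon^{2}\, x},$$
where the last step uses Chebyshev-type bounds on $\pi$.

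I would then optimise by setting $\epsilon = x^{-\beta}$ for a suitable $\beta\in(0,1/2)$; requiring each contribution to be $O(x^{-\alpha'})$ gives the constraints $\beta\ge\alpha'$, $\alpha\ge\alpha'$ and $2\beta-1<-\alpha'$, which admit a positive joint solution such as $\alpha':=\min(\alpha/2,1/4)$ with $\beta:=\alpha'$. This produces an error function $E(x):=\pi_{>0}(x)/\pi(x)-1/2$ satisfying $|E(x)|\le C x^{-\alpha'}$, and Proposition~\ref{prop:regular} then delivers regularity of $\PP_{>0}$. A symmetric argument, interchanging the roles of $B_t(p)>\epsilon$ and $B_t(p)<-\epsilon$, handles $\PP_{<0}$.

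The remaining three sets I would deduce formally from these two. Writing $\pi(x)=\pi_{>0}(x)+\pi_{<0}(x)+\pi_{=0}(x)$, the power-saving bounds just obtained for $\pi_{>0}/\pi$ and $\pi_{<0}/\pi$ combine to give $\pi_{=0}(x)/\pi(x)=O(x^{-\alpha'})$, so a further application of Proposition~\ref{prop:regular} makes $\PP_{=0}$ regular of density $0$. Since $\PP_{\ge 0}=\PP_{>0}\cup\PP_{=0}$ and $\PP_{\le 0}=\PP_{<0}\cup\PP_{=0}$ are disjoint unions, their error functions are sums of ones already controlled and hence also power-saving, giving regularity in these cases too. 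The main obstacle I anticipate is the balancing step: the three error contributions scale oppositely in $\epsilon$, and the term $\pi(1/(4\epsilon^{2}))/\pi(x)$ coming from small primes discarded in the sandwich is what prevents one from recovering the full exponent $\alpha$ in the output and forces $\alpha'<1/3$; some care is also needed to absorb the $\log x$ factor into a slightly smaller exponent so that the hypothesis of Proposition~\ref{prop:regular} is met exactly as stated.
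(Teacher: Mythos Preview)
Your proposal is correct and follows essentially the same approach as the paper: bound $\pi_{>0}(x)/\pi(x)-\tfrac12$ via the sandwich from Theorem~\ref{thm:prime}, insert the Sato--Tate error hypothesis, optimise $\epsilon$ as a negative power of~$x$, and invoke Proposition~\ref{prop:regular}. The only cosmetic differences are that the paper obtains the upper bound on $\pi_{>0}(x)/\pi(x)$ via the complement identity $\pi_{\ge 0}=\pi-\pi_{<0}$ rather than from your direct two-sided sandwich, and it takes $\epsilon=x^{-2\alpha}$ (tacitly assuming $\alpha$ small enough) instead of your $\epsilon=x^{-\min(\alpha/2,\,1/4)}$.
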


\begin{proof}
We start as in the proof of Theorem~\ref{thm:prime} up to Equation~\eqref{eq:up}
and plug in the error term to get for all $\epsilon>0$ and all $x>0$
\begin{multline*}
\pi_{>0}(x) + \pi(\frac{1}{4\epsilon^2}) 
 \ge \#\{p \le x \textnormal{ prime }| \; B_t(p) > \epsilon\}\\
 \ge - C\pi(x)x^{-\alpha} + \mu([\epsilon,1]) \pi(x)
 \ge - C\pi(x)x^{-\alpha} + (\frac{1}{2} - \mu([0,\epsilon])) \pi(x).
\end{multline*}
Using the estimates $\mu([0,\epsilon])=\int_0^\epsilon \sqrt{1-t^2} dt \le \epsilon$ and
$\frac{x}{\log(x)+2} \le \pi(x) \le \frac{x}{\log(x)-4}$, which is valid for $x>55$
(see \cite{Rosser} Theorem 29 A on p.~211), we get
$$ \frac{\pi_{>0}(x)}{\pi(x)} - \frac{1}{2}
\ge -\big(C x^{-\alpha} + \epsilon +
  \frac{(\log(x)+2)}{-4 x \epsilon^2 (\log(4\epsilon^2) +4)} \big) \ge - C_1 x^{-\alpha}$$
for some $C_1>0$ and all $x$ big enough, where for the last inequality we set
$\epsilon = x^{-2\alpha}$.
The same argument with $\pi_{<0}(x)$ yields
$$ \frac{\pi_{<0}(x)}{\pi(x)} - \frac{1}{2} \ge - C_1 x^{-\alpha}.$$
Using $\pi_{\ge 0}(x) = \pi(x) - \pi_{<0}(x)$, we get
$\frac{\pi_{>0}(x)}{\pi(x)} - \frac{1}{2} \le \frac{\pi_{\ge 0}(x)}{\pi(x)} - \frac{1}{2}
\le C_1 x^{-\alpha}$.
Thus, one has
$$ \left| \frac{ \pi_{>0}(x)}{\pi (x)} - \frac{1}{2} \right| \le \frac{C_1}{x^\alpha}.$$
Proposition~\ref{prop:regular} now implies that $\PP_{>0}$ is a regular
set of primes. The regularity of $\PP_{<0}$ is obtained in the same way, implying
also the regularity of $\PP_{=0}$, $\PP_{\ge 0}$ and $\PP_{\le 0}$.
\end{proof}

\begin{rem}
Assume the setup of Theorem~\ref{thm:prime}.
Let $[a,b] \subseteq [-1,1]$ be a subinterval. Then using the same arguments
as in the proof of Theorem~\ref{thm:prime} one can show that the set of primes
$\{p \;|\; \frac{a(tp^2)}{2a(t)p^{k-1/2}} \in [a,b]\}$ has a natural
density equal to $\mu([a,b])$.
Similarly, a more general version of Theorem~\ref{thm:prime-regular} also holds.
\end{rem}

\section{Equidistribution of signs for $\{a(tn^2)\}_{n \in \NN}$}\label{sec:all}

In this section we prove our equidistribution result for the signs of the
coefficients $a(tn^2)$, when $n$ runs through the natural numbers. The same arguments
also work for $n$ running through $k$-free positive integers for any $k \ge 1$.

\begin{thm}\label{thm:squarefree2}
We make the same assumptions as in Theorem~\ref{thm:prime}.
As in Theorem~\ref{thm:prime-regular} we additionally assume that there are
$C>0$ and $\alpha > 0$ such that for all subintervals $[a,b] \in [-1,1]$ one has
$$ \left| \frac{ \# \{p\le x \textnormal{ prime }| \; \frac{A_t(p)}{a(t)2p^{k-1/2}} \in [a,b]\}}{\pi (x)}
 - \mu([a,b]) \right| \le \frac{C}{x^\alpha}.$$
We also assume $a(t)>0$.
We define the multiplicative (but, not completely multiplicative) function
$$s(n) =
 \begin{cases}
  1 & \text{if $a(tn^2)>0$,}\\
 -1 & \text{if $a(tn^2)<0$,}\\
  0 & \text{if $a(tn^2)=0$.}
 \end{cases}$$
Let $S(z):=\sum_{n=1}^{\infty} \frac{s(n)}{n^z}$ be the Dirichlet series of~$s(n)$.

Then $S(z)$ is holomorphic for $\Real(z) \ge 1$.
\end{thm}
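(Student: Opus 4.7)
The plan is to exploit the multiplicativity of $s(n)$, rewrite $S(z)$ as an Euler product, and pass to logarithms so that the analytic continuation across the line $\Real(z)=1$ is entirely controlled by the regularity of the sign sets $\PP_{>0}$ and $\PP_{<0}$. Since $|s(n)| \leq 1$, the series $S(z)$ converges absolutely on $\Real(z) > 1$ and there it factors as
$$ S(z) = \prod_p E_p(z), \qquad E_p(z) := 1 + \sum_{k \geq 1} \frac{s(p^k)}{p^{kz}}, $$
with each local factor $E_p(z)$ analytic on $\Real(z) > 0$.

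I would fix some $\sigma_0 \in (1/2, 1)$ and choose $P$ large enough that $|E_p(z) - 1| \leq 1/2$ for every $p > P$ and every $z$ with $\Real(z) \geq \sigma_0$; this is possible because the bound $|E_p(z)-1| \leq p^{-\sigma_0}/(1-p^{-\sigma_0})$ tends to $0$ as $p \to \infty$ uniformly in such $z$. On this range the principal branch of the logarithm is available for each such $E_p$, and a Taylor expansion yields
$$ \log E_p(z) = \frac{s(p)}{p^z} + h_p(z), \qquad |h_p(z)| = O\bigl(p^{-2\sigma_0}\bigr), $$
uniformly on $\Real(z) \geq \sigma_0$. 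Hence $\sum_{p > P} h_p(z)$ converges absolutely and defines an analytic function on $\Real(z) > 1/2$.

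The key analytic input is the control of the linear term $\sum_{p > P} s(p)/p^z$. I would split it as
$$ \sum_{p > P} \frac{s(p)}{p^z} \;=\; \sum_{\substack{p \in \PP_{>0} \\ p > P}} \frac{1}{p^z} \;-\; \sum_{\substack{p \in \PP_{<0} \\ p > P}} \frac{1}{p^z}, $$
and invoke Theorem~\ref{thm:prime-regular}, which yields that both $\PP_{>0}$ and $\PP_{<0}$ are regular sets of primes of Dirichlet density $1/2$, combined with Lemma~\ref{lem:regular}(c), which upgrades this to analyticity of the difference on $\Real(z) \geq 1$. Removing the finitely many primes $p \leq P$ is harmless.

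Putting the pieces together, $\log\prod_{p > P} E_p(z)$ extends analytically to $\Real(z) \geq 1$, and therefore so does its exponential $\prod_{p > P} E_p(z)$; multiplying by the finite product $\prod_{p \leq P} E_p(z)$, which is already analytic on $\Real(z) > 0$, gives the desired analytic extension of $S(z)$ and, by uniqueness of analytic continuation, agrees with the Dirichlet series on $\Real(z) > 1$. The main technical nuisance I expect is the bookkeeping around the logarithm, namely isolating the finitely many small primes for which $E_p(z)$ may fail to lie in the disc where the principal branch is defined; once that is separated from the bulk tail on which the geometric estimates apply, the rest is a clean orchestration of the regularity results of Section~\ref{sec:density}.
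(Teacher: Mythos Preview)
Your proposal is correct and follows essentially the same route as the paper: Euler product, logarithm, isolate the linear term $\sum_p s(p)/p^z = \sum_{\PP_{>0}} 1/p^z - \sum_{\PP_{<0}} 1/p^z$, invoke Theorem~\ref{thm:prime-regular} and Lemma~\ref{lem:regular}(c) for its analyticity on $\Real(z)\ge 1$, then exponentiate. The only difference is cosmetic: you explicitly peel off the finitely many primes $p\le P$ to secure the principal branch of the logarithm, whereas the paper handles this implicitly via the bound $|g(z,p)|\le 1/(p^z(p^z-1))$ and writes the argument directly for all primes.
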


\begin{proof}
Since $s(n)$ is multiplicative because of Equation \ref{coef} and the assumption $a(t)>0$,
we have the Euler product $S(z)=\prod_{p \in \PP}^{}\sum_{k=0}^{\infty}s(p^k)p^{-kz}$.
Taking logarithm of $S(z)$, we obtain
\begin{multline*}
\log S(z)
=\sum_{p \in \PP_{>0}} \log \big(1+ \frac{1}{p^z}+g(z,p)\big)
+\sum_{p \in \PP_{<0}} \log \big(1- \frac{1}{p^z}+g(z,p)\big)\\
+\sum_{p \in \PP_{=0}} \log \big(1               +g(z,p)\big),
\end{multline*}
where $g(z,p) := \sum_{m=2}^\infty\frac{s(p^m)}{p^{mz}}$ is a holomorphic function on
$\Real(z) \ge 1$ in the variable~$z$ for fixed~$p$.
Note that for all $p$, we have $|g(z,p)| \leq \frac{1}{p^z(p^z -1)}$.
Using this, we conclude that 
$$ \log S(z) = \sum_{p \in \PP_{>0}} \frac{1}{p^z} - \sum_{p \in \PP_{<0}} \frac{1}{p^z}+k(z), $$
where $k(z)$ is holomorphic function for $\Real(z) \ge 1$.

Since $\PP_{>0}$ and $\PP_{<0}$ are regular set of primes having the same density $\frac{1}{2}$ by Theorem~\ref{thm:prime-regular}, we can conclude that $\log S(z)$ is analytic for $\Real(z) \ge 1$ by Lemma~\ref{lem:regular}. Since the exponential function is holomorphic, we find that $S(z)$ is analytic for $\Real(z) \ge 1$ by taking the exponential of $\log S(z)$, as was to be shown.
\end{proof}

We now deduce our main density statement from Theorem~\ref{thm:squarefree2}.

\begin{cor}\label{cor:squarefree2}
Assume the setting of Theorem~\ref{thm:squarefree2}.
Then the sets
$$ \{n \in \NN \;|\; a(tn^2) > 0 \}
\textrm{ and } \{n \in \NN \;|\;  a(tn^2) < 0 \}$$
have equal positive Dedekind-Dirichlet densities, that is,
both are precisely half of the Dedekind-Dirichlet density of the set
$ \{n \in \NN \;|\; a(tn^2) \neq 0 \}$.
\end{cor}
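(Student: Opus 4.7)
The plan is to translate Theorem~\ref{thm:squarefree2} into density statements via a standard Dirichlet series split. Set $D_{+}(z) := \sum_{n : a(tn^{2}) > 0} n^{-z}$, $D_{-}(z) := \sum_{n : a(tn^{2}) < 0} n^{-z}$, and $T(z) := D_{+}(z) + D_{-}(z) = \sum_{n : a(tn^{2}) \neq 0} n^{-z}$, so that $S(z) = D_{+}(z) - D_{-}(z)$ and $D_{\pm}(z) = \tfrac{1}{2}(T(z) \pm S(z))$. Theorem~\ref{thm:squarefree2} makes $S(z)$ holomorphic on $\Real(z) \ge 1$, whence $(z-1) S(z) \to 0$ as $z \to 1^{+}$. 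It therefore suffices to show that the limit $L := \lim_{z \to 1^{+}} (z-1) T(z)$ exists and is strictly positive: then both $\delta(\{n : a(tn^{2}) > 0\})$ and $\delta(\{n : a(tn^{2}) < 0\})$ equal $L/2$, and $L$ itself is the Dedekind--Dirichlet density of $\{n : a(tn^{2}) \neq 0\}$.

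To analyse $T(z)$ I would exploit that the indicator $\mathbf{1}[a(tn^{2}) \neq 0]$ is multiplicative: by~(\ref{coef}) and $a(t) > 0$, for coprime $m, n$ one has $a(tm^{2}n^{2}) = a(tm^{2}) a(tn^{2})/a(t)$, so $a(tn^{2}) \neq 0$ if and only if $a(tp^{2v_{p}(n)}) \neq 0$ for every prime $p \mid n$. Writing $c(p^{e}) := \mathbf{1}[a(tp^{2e}) \neq 0] \in \{0, 1\}$ and $L_{p}(z) := 1 + \sum_{e \ge 1} c(p^{e}) p^{-ez}$, this gives the Euler product $T(z) = \prod_{p} L_{p}(z)$. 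The strategy is then to factor out the pole of $\zeta$ by setting $V(z) := T(z)/\zeta(z) = \prod_{p} L_{p}(z)(1 - p^{-z})$ and to prove that $V$ is holomorphic and non-vanishing on $\Real(z) \ge 1$ with $V(1) > 0$; this will yield $L = V(1)$.

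The $p$-th Euler factor of $V$ expands as $L_{p}(z)(1 - p^{-z}) = 1 + (c(p) - 1) p^{-z} + \sum_{e \ge 2} (c(p^{e}) - c(p^{e-1})) p^{-ez}$, with coefficients in $\{-1, 0, 1\}$. For $p \notin \PP_{=0}$ one has $c(p) = 1$, so this factor equals $1 + O(p^{-2z})$ and the corresponding sub-product converges absolutely on $\Real(z) > 1/2$. For $p \in \PP_{=0}$ the factor equals $1 - p^{-z} + O(p^{-2z})$; absolute convergence of the corresponding sub-product on $\Real(z) \ge 1$ reduces to convergence of $\sum_{p \in \PP_{=0}} p^{-z}$ there. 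This is the key input: Theorem~\ref{thm:prime-regular} asserts that $\PP_{=0}$ is a \emph{regular} set of primes of density~$0$, so by the definition of regularity the series $\sum_{p \in \PP_{=0}} p^{-z}$ extends holomorphically to $\Real(z) \ge 1$, and by Lemma~\ref{lem:regular}(b) one has $\sum_{p \in \PP_{=0}} 1/p < \infty$. Passing to logarithms then shows $V$ is holomorphic and non-vanishing on $\Real(z) \ge 1$, and $V(1) > 0$ because every local factor at $z = 1$ lies in the positive interval $[1 - 1/p, 1]$.

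The principal obstacle is the convergence of the Euler product $V$ on the line $\Real(z) = 1$, which is precisely where the regularity upgrade supplied by Theorem~\ref{thm:prime-regular}---stronger than the mere natural density zero conclusion of Theorem~\ref{thm:prime}---is indispensable. The subsidiary nuisance of prime powers $p^{e}$, $e \ge 2$, at which $c(p^{e}) = 0$ is harmless, since such terms contribute only $O(p^{-2z})$ to $V(z) - 1$ and are absorbed by the absolutely convergent series $\sum_{p} p^{-2z}$.
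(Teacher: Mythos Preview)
Your argument is correct and follows essentially the same route as the paper: both split $D_{\pm}=\tfrac{1}{2}(T\pm S)$, invoke Theorem~\ref{thm:squarefree2} for $S$, and analyse $T(z)/\zeta(z)$ via its Euler product, splitting primes according to $\PP_{=0}$ and using the regularity of $\PP_{=0}$ from Theorem~\ref{thm:prime-regular} together with Lemma~\ref{lem:regular}(b). The paper's $A(z)$ is exactly your $V(z)$, and the only cosmetic difference is that the paper reaches the conclusion via $\zeta = T + \sum_{a(tn^2)=0} n^{-z}$ rather than directly from $D_{\pm}=\tfrac{1}{2}(T\pm S)$.
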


\begin{proof}
We assume without loss of generality $a(t)>0$, since the statement is invariant under
replacing $f$ by~$-f$. We use the notation of Theorem~\ref{thm:squarefree2}.
Since the set of all natural numbers has a Dedekind-Dirichlet density of $1$ (since the
Riemann-zeta function has a pole of order $1$ and residue~$1$ at~$1$), we can write
$$\lim_{z \to 1^+} (z-1)\sum_{n \in \NN }^{}\frac{1}{n^z}=1.$$
Since $S(z)= \sum_{a(tn^2)>0} \frac{1}{n^z} - \sum_{a(tn^2)<0} \frac{1}{n^z}$ is holomorphic for $\Real(z) \ge 1$ by Theorem \ref{thm:squarefree2} (in fact, much less suffices), we deduce
$$\lim_{z \to 1^+} (z-1)\left[2 \sum_{a(tn^2)>0}^{}\frac{1}{n^z}+\sum_{a(tn^2)=0}^{}\frac{1}{n^z}\right]=1.$$
Define $t(n) :=[s(n)]^2$ and $T(z):=\sum_{n=1}^{\infty}\frac{t(n)}{n^z}$.
Since $t(n)$ is multiplicative, $T(z)$ has an Euler product, which we use now.
Put $A(z):=\frac{T(z)}{\zeta(z)}$, then
\begin{align*}
A(z)
&= \prod_{p}^{} (1-\frac{1}{p^z}). \prod_{p}^{}\left(1+\sum_{n=1, a(tp^{2n}) \neq 0}^{\infty} \frac{1}{p^z}\right)\\
&= \prod_{p, a(tp^2) \neq 0}^{}\left((1-\frac{1}{p^z})(1+\frac{1}{p^z}+\sum_{n=2,a(tp^{2n}) \neq 0}^{\infty}\frac{1}{p^{nz}})\right)\\
&\cdot \prod_{p, a(tp^2) = 0}^{}\left((1-\frac{1}{p^z})(1+\sum_{n=2}^{\infty}\frac{1}{p^{nz}})\right) \\
&= \prod_{p,a(tp^2)\neq 0}^{} \left(1- \frac{1}{p^{2z}}+r_1(z,p)\right).\prod_{p,a(tp^2)=0}^{} \left(1-\frac{1}{p^z}+r_2(z,p)\right),
\end{align*}
where $r_1(z,p)$ and $r_2(z,p)$ are the remaining terms. 
Taking the logarithm of $A(z)$, we conclude that $\sum_{p,a(tp^2)=0}^{} \log \left(1-\frac{1}{p^z}+r_2(z,p)\right)$ is holomorphic on $\Real(z) \ge 1$, since $\PP_{=0}$ is a regular set of primes of density $0$, by Theorem \ref{thm:prime-regular}.
Moreover $\sum_{p,a(tp^2)\neq 0}^{} \log\left(1- \frac{1}{p^{2z}}+r_1(z,p)\right)$ is also holomorphic on $\Real(z) \ge 1$.
Taking the exponential shows that $A(z)=\frac{T(z)}{\zeta(z)}$ is holomorphic for $\Real(z) \ge 1$.
We obtain $A(1)>0$ and $\lim_{z\to 1^+} (z-1)T(z)=A(1)$.
Therefore we conclude that the set $\{n \in \NN \;|\; a(tn^2) \neq 0 \}$ has a
Dedekind-Dirichlet density equal to $A(1)$. Hence, the limit
$\lim_{z\to 1^+} (z-1)\sum_{a(tn^2)=0 }^{}\frac{1}{n^z}=1-A(1)$
exists.
So we conclude that 
$$\lim_{z \to 1^+}(z-1)\sum_{a(tn^2)>0 }^{}\frac{1}{n^z}=\frac{A(1)}{2}.$$
This implies that the Dedekind-Dirichlet density of the two sets in the statement are equal and completes the proof.
\end{proof}

\subsection*{Acknowledgements}

I.I.\ was supported by The Scientific and Technological Research Council of Turkey (TUBITAK). I.I would also like to thank the University of Luxembourg for having
hosted him as a postdoctoral fellow of TUBITAK
G.~W.\ acknowledges partial support by the priority program~1489 of the
Deutsche Forschungsgemeinschaft (DFG) and by the Fonds National de la
Recherche Luxembourg (INTER/DFG/12/10).

The authors would like to thank Winfried Kohnen and Jan Bruinier for having suggested
the problem and encouraged them to write this article.


\begin{thebibliography}{xx}
\bibitem{AT} Akiyama, S., Tanigawa, Y.: Calculation of Values of $L-$ Functions Associated to Elliptic Curves, \textit{Mathematics of Computation}, \textbf{68}, 227, 1201-1231, (1999),
\bibitem{AIG} Arias-de-Reyna, S., Inam, I., Wiese, G.: On Conjectures of Sato-Tate and Bruinier-Kohnen, preprint, (2013), arXiv:1305.5443, 
\bibitem{BLGHT} Barnet-Lamb, T., Geraghty, D., Harris, M., Taylor, R.: A Family of Calabi-Yau varities and Potential Automorphy II, \textit{Publications of the Research Institute for Mathematical Sciences}, \textbf{47}, 1, 29-98, (2011),
\bibitem{BK} Bruinier, J.H., Kohnen, W.: Sign Changes of Coefficients of Half Integral Weight Modular Forms, Modular forms on Schiermonnikoog, Eds.: B. Edixhoven, G. van der Geer and B. Moonen, Cambridge University Press, 57-66,  (2008),
\bibitem{Du} Dummigan, N.: Congruences of Modular Forms and Selmer Groups, \textit{Math. Res. Lett.}, \textbf{8}, 479-494, (2001),
\bibitem{I} Inam, I.: Selmer Groups in Twist Families of Elliptic Curves, \textit{Quaestiones Mathematicae}, \textbf{35}, 4, 471-487, (2012),
\bibitem{Kohnen} Kohnen, W.: A Short Note on Fourier Coefficients of Half-Integral Weight Modular Forms, \textit{Int. J. of Number Theory}, \textbf{06}, 1255 (2010), DOI: 10.1142/S1793042110003484,
\bibitem{KLW} Kohnen, W., Lau, Y.-K., Wu, J.: Fourier Coefficients of Cusp Forms of Half-Integral Weight, \textit{Math. Zeitschrift}, \textbf{273}, 29-41, (2013),
\bibitem{Ku} Kumar, N.: On sign changes of q-exponents of generalized modular functions, J. Number Theory \textbf{133}, (2013), no. 11, 3589-3597,
\bibitem{KZ} Kohnen, W., Zagier, D.: Values of $L-$series of Modular Forms at the Center of the Critical Strip, \textit{Invent. Math.}, \textbf{64}, 175-198, (1981),
\bibitem{Mazur} Mazur, B.: Finding Meaning in Error Terms, \textit{Bulletin of the Amer. Math. Soc.}, \textbf{45}, 2, 185-228, (2008),
\bibitem{Na} Narkiewicz, W.: Elementary and Analytic Theory of Algebraic Numbers, Third Edition, Springer, 708 pp., (1990),
\bibitem{Niwa} Niwa, S.: Modular Forms of Half Integral Weight and the Integral of Certain Theta-Functions, \textit{Nagoya Math. J.}, \textbf{56}, 147-161, (1974),
\bibitem{Rosser} Rosser, B.: Explicit Bounds for Some Functions of Prime Numbers, \textit{Amer. Journal of Mathematics}, \textbf{63}, 1, 211-232, (1941),
\bibitem{Shi} Shimura, G.: On Modular Forms of Half-Integral Weight, \textit{Annals of Math.}, \textbf{97}, 440-481. (1973),
\end{thebibliography}
\end{document}